\renewcommand{\baselinestretch}{1.12}
\renewcommand{\thefootnote}{\fnsymbol{footnote}}	
\crefname{lem}{Lemma}{Lemmas}
\crefname{thm}{Theorem}{Theorems}
\crefname{prop}{Proposition}{Propositions}
\newcommand\DateFootnote{
\begingroup
\renewcommand\thefootnote{}
\footnote{\today}
\setcounter{footnote}{0}
\vspace*{-3ex}
\endgroup}
\renewcommand\section{\@startsection {section}{1}{\z@}%
                                   {-3ex \@plus -1ex \@minus -.2ex}%
                                   {2ex \@plus.2ex}%
                                   {\normalfont\large\bfseries}}
\renewcommand\subsection{\@startsection{subsection}{2}{\z@}%
                                     {-2.5ex\@plus -1ex \@minus -.2ex}%
                                     {1.5ex \@plus .2ex}%
                                     {\normalfont\normalsize\bfseries}}
\renewcommand\subsubsection{\@startsection{subsubsection}{3}{\z@}%
                                     {-2ex\@plus -1ex \@minus -.2ex}%
                                     {1ex \@plus .2ex}%
                                     {\normalfont\normalsize\bfseries}}
 \renewcommand\paragraph{\@startsection{paragraph}{4}{\z@}%
                                    {1.5ex \@plus.5ex \@minus.2ex}%
                                    {-1em}%
                                    {\normalfont\normalsize\bfseries}}
\renewcommand\subparagraph{\@startsection{subparagraph}{5}{\parindent}%
                                       {1.5ex \@plus.5ex \@minus .2ex}%
                                       {-1em}%
                                      {\normalfont\normalsize\bfseries}}
\newcommand{\arXiv}[1]{arXiv:\,\href{http://arxiv.org/abs/#1}{#1}}
\newcommand{\msn}[1]{MR:\,\href{http://www.ams.org/mathscinet-getitem?mr=MR#1}{#1}}
\newcommand{\MSN}[2]{MR:\,\href{http://www.ams.org/mathscinet-getitem?mr=MR#1}{#1}}
\newcommand{\doi}[1]{doi:\,\href{http://dx.doi.org/#1}{#1}}
\theoremstyle{plain}
\newtheorem{thm}{Theorem}
\newtheorem{lem}[thm]{Lemma}
\theoremstyle{definition}
\newcommand{\Oh}[1]{\ensuremath{\protect\mathcal{O}(#1)}}
\renewcommand{\geq}{\geqslant}
\renewcommand{\leq}{\leqslant}
\DeclareMathOperator{\mf}{mf}
\begin{document}

{\Large\bfseries\boldmath\scshape Nonrepetitive Colourings of Graphs Excluding\\[0.6ex]  a Fixed Immersion or Topological Minor}

\DateFootnote

{\large 
Paul Wollan\,\footnotemark[2]
\quad 
David~R.~Wood\,\footnotemark[3]
}

\footnotetext[2]{Department of Computer Science, University of Rome, ``La Sapienza'', Rome, Italy  (\texttt{wollan@di.uniroma1.it}). 
Supported by the European Research Council under the European UnionÕs Seventh Framework Programme (FP7/2007-2013)/ERC Grant Agreement no. 279558.}

\footnotetext[3]{School of Mathematical Sciences, Monash University, Melbourne, Australia (\texttt{david.wood@monash.edu}). \\
Research supported by  the Australian Research Council.}

\emph{Abstract.} We prove that graphs excluding a fixed immersion have bounded nonrepetitive chromatic number. More generally, we prove that if $H$ is a fixed planar graph that has a planar embedding with all the vertices with degree at least 4 on a single face, then graphs excluding $H$ as a topological minor have bounded nonrepetitive chromatic number. This is the largest class of graphs known to have bounded nonrepetitive chromatic number.
\renewcommand{\thefootnote}{\arabic{footnote}}

\section{Introduction}

A vertex colouring of a graph is \emph{nonrepetitive} if there is no path for which the first half of the path is assigned the same sequence of colours as the second half.  More precisely, a $k$-\emph{colouring} of a graph $G$ is a function $\psi$ that assigns one of $k$ colours to each vertex of $G$.  A path $(v_1,v_2,\dots,v_{2t})$ of even order in $G$ is \emph{repetitively} coloured by $\psi$ if $\psi(v_i)=\psi(v_{t+i})$ for $i\in\{1,\dots,t\}$. A colouring $\psi$ of $G$ is \emph{nonrepetitive} if no path of $G$ of even order is repetitively coloured by $\psi$. Observe that a nonrepetitive colouring is \emph{proper}, in the sense that adjacent vertices are coloured differently. The \emph{nonrepetitive chromatic number} $\pi(G)$ is the minimum integer $k$ such that $G$ admits a nonrepetitive $k$-colouring. We only consider simple graphs with no loops or parallel edges. 

The seminal result in this area is by \citet{Thue06}, who in 1906 proved that every path is nonrepetitively 3-colourable. Thue expressed his result in terms of strings over an alphabet of three characters---\citet{AGHR-RSA02} introduced the generalisation to graphs in 2002. Nonrepetitive graph colourings have since been widely studied \citep{Gryczuk-IJMMS07,Grytczuk-DM08,CSZ,HJ-DM11,Currie-EJC02,HJSS,PZ09,BreakingRhythm,BC12,BaratWood-EJC08,BV-NonRepVertex07,BK-AC04,GKM11,BGKNP-NonRepTree-DM07,Currie-TCS05,NOW,KP-DM08,FOOZ,Pegden11,BV-NonRepEdge08,Grytczuk-EJC02,AGHR-RSA02,GPZ,JS09,DS09}.  The principle result of \citet{AGHR-RSA02} was  that graphs with maximum degree $\Delta$ are nonrepetitively $\Oh{\Delta^2}$-colourable. Several subsequent papers improved the constant \citep{Gryczuk-IJMMS07,HJ-DM11,DJKW16}. The best known bound is due to \citet{DJKW16}.

\begin{thm}[\cite{DJKW16}]
\label{NonRepDegree}
Every graph with maximum degree $\Delta$ is nonrepetitively $(1+o(1))\Delta^2$-colourable.
\end{thm}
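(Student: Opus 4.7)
The plan is to apply the entropy-compression method pioneered by Moser. Fix $\varepsilon > 0$ and set $k := \lceil (1+\varepsilon)\Delta^2 \rceil$. Order the vertices arbitrarily as $v_1, \dots, v_n$, and equip each $v_i$ with an infinite tape of independent uniform random colours $c_{i,1}, c_{i,2}, \dots \in [k]$. Run the following randomised algorithm: at each step, let $v_i$ be the first currently uncoloured vertex and assign it the next unread colour on its tape. If this assignment creates an even path ending at $v_i$ whose two halves carry identical colour sequences, pick the shortest such bad path $P$ of length $2t$, uncolour the $t$ vertices on its second half (namely $v_i$ together with the $t-1$ previously coloured vertices immediately preceding it on $P$), and continue. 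If the process terminates, the resulting colouring is nonrepetitive by construction, so it suffices to show that termination occurs with positive probability.

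The analysis bounds the number of possible execution transcripts of length exactly $T$. Any transcript uses exactly $T$ tape reads, so its probability equals $k^{-T}$. A transcript is determined by the interleaved sequence of bad paths encountered, together with the final (partial) colouring of the currently coloured vertices. Each bad path of length $2t$ in $G$, starting from a known endpoint, is specified by a walk, contributing at most $\Delta(\Delta-1)^{2t-1}$ choices. A Dyck-style encoding of the uncolouring events shows that the total number of transcripts of length $T$ is at most $C(\Delta,n) \cdot f(\Delta)^T$ for a suitable growth rate $f(\Delta)$, so a Kraft-type bound $\sum_T k^{-T} \cdot (\text{transcripts of length } T) < \infty$ forces the algorithm to terminate in finite expected time whenever $k > f(\Delta)$.

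The main obstacle, and the heart of the argument, is pushing $f(\Delta)$ down to $(1+o(1))\Delta^2$; a naive Moser--Tardos count yields a much larger constant. Two savings are essential. First, since the first half of any bad path is already properly coloured, each extension step along the walk has at most $\Delta - 1$ choices rather than $\Delta$. Second, since the second half of the path must exactly duplicate the colour sequence of the first half, the transcript need not separately record the second half's colours, only its underlying edge sequence. Combining these observations carefully, and absorbing lower-order terms arising from the encoding of path lengths and of the final partial colouring, the effective per-step growth is $(\Delta - 1)^2 (1 + o(1))$. Hence any $k \geq (1+o(1))\Delta^2$ already satisfies $k > f(\Delta)$, which gives the claimed bound.
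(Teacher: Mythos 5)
The paper does not prove this theorem; it cites it directly from Dujmovi\'c, Joret, Kozik, and Wood \cite{DJKW16}, whose very title is ``Nonrepetitive colouring via entropy compression.'' Your sketch takes precisely the approach of that reference: a Moser-style resampling algorithm with per-vertex random tapes, uncolouring the second half of the shortest repetitively coloured path detected, and a transcript-counting (Kraft/Dyck-path) argument that exploits (i) the $\Delta-1$ walk-extension factor from proper colouring and (ii) the fact that the second half's colours are forced by the first half, to push the effective branching down to $(1+o(1))\Delta^2$. So this is essentially the same route as the cited proof. Two small cautions: your walk count should be $\Delta(\Delta-1)^{2t-2}$ rather than $\Delta(\Delta-1)^{2t-1}$ for a path on $2t$ vertices (immaterial asymptotically), and the passage from the two ``savings'' to the claimed $f(\Delta)=(1+o(1))\Delta^2$ is where the real work of \cite{DJKW16} lives --- the generating-function bookkeeping for path lengths, the record of which vertices are currently coloured, and the verification that the lower-order correction terms genuinely vanish into the $o(1)$ --- and your sketch asserts this rather than carrying it out.
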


A number of other graph classes are known to have bounded nonrepetitive chromatic number. In particular, trees are nonrepetitively 4-colourable \citep{BGKNP-NonRepTree-DM07,KP-DM08}, outerplanar graphs are nonrepetitively $12$-colourable \citep{KP-DM08,BV-NonRepVertex07}, and graphs with bounded treewidth have bounded nonrepetitive chromatic number \citep{KP-DM08,BV-NonRepVertex07}. (See \cref{TreeDecompositions} for the definition of treewidth.)\ The best known bound is due to \citet{KP-DM08}.

\begin{thm}[\cite{KP-DM08}]
\label{NonRepTreewidth}
Every graph with treewidth $k$ is nonrepetitively $4^k$-colourable.
\end{thm}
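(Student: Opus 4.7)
The strategy is to build a tuple-valued colouring from a rooted tree decomposition, with each coordinate being a Thue-type nonrepetitive $4$-colouring of an auxiliary sequence; this naturally yields at most $4^k$ colours. The guiding principle is that $4^k = 4 \cdot 4 \cdots 4$ with one factor of $4$ per ``layer'' of the decomposition, and Thue's classical theorem gives a nonrepetitive $4$-colouring of the integers, which we want to apply once per layer.

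Setup: fix a tree decomposition $(T,(B_t)_{t \in V(T)})$ of $G$ of width $k$, rooted at a node $r$. For each vertex $v$ of $G$, let $t(v)$ be the node of $T$ nearest $r$ with $v \in B_{t(v)}$, write $d(v) := \mathrm{depth}_T(t(v))$, and let $A(v) \subseteq B_{t(v)} \setminus \{v\}$ be the set of vertices $u$ whose top bag $t(u)$ is a strict ancestor of $t(v)$. Since the subtree of bags containing $u$ is connected, the top bags $\{t(u) : u \in A(v)\}$ lie on the $r$-to-$t(v)$ path in $T$, so the depths $d(u)$ with $u \in A(v)$ are pairwise distinct and all strictly less than $d(v)$. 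We have $|A(v)| \le k$, and crucially every edge $uv \in E(G)$ with $d(u) < d(v)$ satisfies $u \in A(v)$.

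Colouring: Fix a nonrepetitive sequence $\sigma \colon \mathbb{N} \to \{1,2,3,4\}$ given by Thue's theorem. For each $v$, order $A(v)$ by depth as $u_1, u_2, \ldots, u_{|A(v)|}$, and define $\psi(v) \in \{1,2,3,4\}^k$ coordinatewise by making its $i$-th coordinate a function of $\sigma$ applied to an integer that canonically codes ``how $v$ attaches to $u_i$ on the ancestor chain,'' with fixed default symbols in the remaining coordinates. This uses at most $4^k$ colours. To verify nonrepetitiveness, suppose a path $P = v_1 v_2 \cdots v_{2t}$ is coloured with $\psi(v_i) = \psi(v_{t+i})$ for all $i$. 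Each edge of $P$ either ``descends'' (with the lower-depth endpoint in $A$ of the higher) or ``ascends,'' so the depth sequence $(d(v_j))_j$ is piecewise-monotone and its extrema are determined by the tree structure. Matching colour tuples imply that the corresponding $A$-orderings agree across the two halves, hence the depth sequences agree, and peeling off the coordinates one layer at a time forces a genuine Thue repetition in $\sigma$, contradicting Thue's theorem.

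The main obstacle is the verification step: the coordinate encoding must be chosen precisely enough so that equal colour tuples on paired vertices $v_i, v_{t+i}$ really do propagate down to a repetition in $\sigma$. The subtlety is that a path in $G$ can wander back and forth between many branches of $T$, entering and leaving subtrees multiple times, so the map from ``path in $G$'' to ``subsequence of $\sigma$'' is not obvious; pinning down the right invariant to place in each coordinate is where the real work lies. A robust fallback I would pursue if the direct encoding gets unwieldy is induction on $k$: apply the inductive hypothesis to the treewidth-$(k-1)$ graph obtained by removing one chosen vertex from each bag, then extend the resulting $4^{k-1}$-colouring using $4$ fresh colours assigned by a Thue-nonrepetitive labelling along the ``backbone'' formed by the removed vertices.
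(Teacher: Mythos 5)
This is a cited result (K\"{u}ndgen--Pelsmajer), not one the paper proves, so I compare your sketch against the source proof. Your high-level instincts are right --- root the tree decomposition, use the depth of the topmost bag as a layer index, feed that into a Thue-type sequence, and extract one factor of $4$ per layer of width --- and this is indeed in the spirit of the original. However, both versions of your argument have essential gaps exactly where K\"{u}ndgen and Pelsmajer put their main lemma, and you candidly admit as much (``where the real work lies'').

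Concretely: in your primary approach, the colour of $v$ is a $k$-tuple whose $i$th coordinate is ``a function of $\sigma$ applied to an integer that canonically codes how $v$ attaches to $u_i$''; this map is never defined, and the verification that matching tuples on the two halves of a path ``peel off'' to a genuine square in $\sigma$ is asserted rather than proved. The difficulty you correctly flag --- that a path in $G$ revisits subtrees of $T$ arbitrarily, so there is no obvious monotone map from path vertices to positions in $\sigma$ --- is precisely what must be overcome, and it is not. The actual proof handles this through a lemma about \emph{shadow-complete layerings}: it isolates the exact combinatorial property of the depth-layering (that the neighbourhood, in a layer $L_{i}$, of any connected piece of $G$ living in $L_{i+1}\cup L_{i+2}\cup\cdots$ is controlled by the tree decomposition) which forces the layer-index sequence of a repetitively coloured path to yield a square after collapsing runs, and which allows the induction to descend into a strictly smaller treewidth structure on each layer. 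Without stating and proving some lemma of this kind, the ``peeling'' step is a gap, not a proof.

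Your fallback (induct on $k$ by deleting one vertex per bag, then add a factor of $4$) is structurally closer to the real argument, but is also incomplete: (i) the deleted set must be a consistent transversal of the bags, which requires, e.g., a smooth tree decomposition, and this is not addressed; (ii) the deleted set does not in general form a path or any ``backbone,'' so ``a Thue-nonrepetitive labelling along the backbone'' is not well-defined; and (iii), most importantly, you never show that gluing a $4^{k-1}$-colouring of $G-S$ with a $4$-colouring of $S$ produces a nonrepetitive colouring of $G$ --- a repetitively coloured path may alternate freely between $S$ and $G-S$, and ruling this out is again exactly the content of the missing shadow-complete-layering lemma. A small additional slip: Thue's theorem gives a nonrepetitive $3$-colouring of sequences, not $4$; the $4$ per level in the K\"{u}ndgen--Pelsmajer bound does not come directly from Thue's sequence but from the extra bookkeeping needed to handle branching in the layering, which is why even trees ($k=1$) require $4$ colours rather than $3$.
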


The primary contribution of this paper is to provide a qualitative generalisations of   \cref{NonRepDegree,NonRepTreewidth} via the notion of graph immersions and excluded topological minors.

A graph $G$ contains a graph $H$ as an \emph{immersion} if the vertices of $H$ can be mapped to distinct vertices of $G$, and the edges of $H$ can be mapped to pairwise edge-disjoint paths in $G$, such that each edge $vw$ of $H$ is mapped to a path in $G$ whose endpoints are the images of $v$ and $w$.  The image in $G$ of each vertex in $H$ is called a \emph{branch vertex}. Structural and colouring properties of graphs excluding a fixed immersion have been widely studied \cite{GKT15,FW16a,Wollan15,DW16,Dvorak12,DDFMMS14,LT14,AL03,MW14,DK14,DMMS13,GKT13,RS10}. We prove that graphs excluding a fixed immersion have bounded nonrepetitive chromatic number. 

\begin{thm}
\label{NonRepWeakImm}
For every graph $H$ with $t$ vertices, every graph that does not contain $H$ as an  immersion is nonrepetitively $4^{t^4+O(t^2)}$-colourable.
\end{thm}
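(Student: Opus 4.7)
The plan is to combine a structural theorem for immersion-excluded graphs with a tree-cut analogue of \cref{NonRepTreewidth}. First, I would reduce to the case $H=K_t$: since $H$ has $t$ vertices and the immersion relation is monotone under subgraph containment, any $K_t$-immersion in $G$ yields an $H$-immersion, so it suffices to bound $\pi(G)$ for graphs with no $K_t$-immersion.

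The main structural input I would invoke is Wollan's theorem on the structure of graphs excluding a fixed immersion, which produces a tree-cut decomposition $(T,\{X_s\}_{s\in V(T)})$ of $G$ of width $w$ polynomial in $t$. Recall that this is a near-partition of $V(G)$ indexed by a tree, with each bag of size at most $w$ and each edge of $T$ corresponding to an edge cut of $G$ of size at most $w$ (the adhesion). The heart of the proof would then be a tree-cut analogue of \cref{NonRepTreewidth}: graphs of tree-cut width $w$ have nonrepetitive chromatic number at most $4^{O(w)}$. Combined with Wollan's polynomial bound on $w$ in terms of $t$, this yields a bound of the form claimed in the theorem.

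To prove the tree-cut analogue, I would adapt the rooted tree-decomposition argument of \citet{KP-DM08}. Root $T$ and process bags bottom-up, colouring each bag with a product palette that encodes both the current bag's colouring and the colour pattern on the at most $w$ adhesion edges above it. In an ordinary tree decomposition adjacent vertices share a bag, so only bag colours need be remembered; in the tree-cut setting adjacent vertices can lie in different bags and are connected only via adhesion edges, and a repetitive path may cross a single edge-cut many times as it weaves between a subtree and its complement. Forbidding such weaving repetitive paths is the main obstacle: encoding enough path information in the palette while keeping it of size $4^{O(w)}$ should rely on the torso description of tree-cut decompositions and, in particular, on the fact that the 3-edge-connected torsos have bounded treewidth and can therefore be coloured directly by \cref{NonRepTreewidth}, with the extra $4^{O(t^{2})}$ factor in the final bound absorbing the interaction of these torso colourings across adhesion edges.
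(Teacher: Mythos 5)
Your high-level plan --- Wollan's structure theorem for excluded immersions plus a tree-decomposition-style colouring argument --- points in the right direction, but two of your structural assumptions are wrong, and they sink the proposed argument.

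First, Wollan's theorem (quoted here as \cref{WeakImmStructureWollan}) does \emph{not} produce a tree-cut decomposition with bounded bag sizes. It produces a $T$-partition with adhesion at most $t^{2}$ whose torsos have $(t,t^{2})$-bounded degree; the bags $T_x$ themselves can be arbitrarily large, and no polynomial-in-$t$ ``width'' is available. So the premise ``each bag of size at most $w$'' is not something you can invoke, and a verbatim adaptation of the rooted K\"undgen--Pelsmajer bag-by-bag colouring does not apply.

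Second, and more seriously, the torsos do \emph{not} have bounded treewidth --- they have bounded degree (apart from at most $t$ exceptional vertices). These are very different conditions. Concretely, the $n\times n$ grid has maximum degree $4$ and excludes $K_6$ as an immersion, yet has treewidth $n$; Wollan's theorem applied to the grid may return the trivial partition whose single torso \emph{is} the grid. Your plan to colour the torsos ``directly by \cref{NonRepTreewidth}'' therefore fails: that theorem is simply inapplicable. The correct tool is the bounded-degree result \cref{NonRepDegree}, packaged here as \cref{NonRepBoundedDegree} for $(c,d)$-bounded-degree graphs. This is not a cosmetic change: it is precisely the point of the structure theorem that bounded degree, not bounded treewidth, is what one gets on the torsos, and the nonrepetitive colouring of those torsos must go through the $\Delta^{2}$-type bound.

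With these corrections, the route the paper actually takes becomes visible and is different from yours in a substantive way. Rather than proving a ``tree-cut analogue of \cref{NonRepTreewidth}'' from scratch, the paper \emph{converts} Wollan's $T$-partition into an ordinary tree decomposition with adhesion at most $t^{2}$ whose torsos are $(t,t^{4}+2t^{2})$-bounded degree (\cref{WeakImmStructureExplicit}); this conversion, which re-routes vertices up towards least common ancestors and carefully bounds how many extra neighbours a vertex can acquire in a torso, is the main technical content. It then black-boxes the needed ``adhesion + colourable torsos $\Rightarrow$ colourable graph'' step as \cref{NonRepTorso}, due to \citet{DMW13}: if a graph has a tree decomposition of adhesion $k$ whose torsos are nonrepetitively $c$-colourable, then the graph is nonrepetitively $c\,4^{k}$-colourable. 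Composing \cref{NonRepTorso} with \cref{WeakImmStructureExplicit} and \cref{NonRepBoundedDegree} gives the stated $4^{t^{4}+O(t^{2})}$ bound. Your ``product palette / remember the adhesion edge pattern'' intuition is in the spirit of the proof of \cref{NonRepTorso}, but you would still need both the conversion step and the switch from treewidth-based to degree-based torso colouring to make the argument go through.

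(Your preliminary reduction to $H=K_t$ is fine but unnecessary; the argument works directly for any $t$-vertex $H$.)
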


Since a graph with maximum degree $\Delta$ contains no star with $\Delta+1$ leaves as an immersion, \cref{NonRepWeakImm} implies that graphs with bounded degree have bounded nonrepetitive chromatic number (as in \cref{NonRepDegree}). 
 
We strengthen  \cref{NonRepWeakImm} as follows (although without explicit bounds).  A graph $G$ contains a graph $H$ as a \emph{strong immersion} if  $G$ contains $H$ as an immersion, such that for each edge $vw$ of $H$, no internal vertex of the path in $G$ corresponding to $vw$ is a  branch vertex. 

\begin{thm}
\label{NonRepStrongImmersion}
For every fixed graph $H$, there exists a constant $k$, such that every graph $G$ that does not contain $H$ as a strong immersion is nonrepetitively $k$-colourable.
\end{thm}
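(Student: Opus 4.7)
My plan is to reduce \cref{NonRepStrongImmersion} to the weak immersion case \cref{NonRepWeakImm}. The two notions differ in that a strong immersion forbids branch vertices from being internal vertices of the paths; a graph excluding $H$ as a strong immersion can therefore still contain $H$ as a weak immersion, but only through configurations in which a few high-degree vertices act as ``hubs'' through which many paths are routed. This suggests that the relevant structure theorem should separate out high-degree hub vertices from low-degree regions. Concretely, I would invoke a structure theorem for strong-immersion-excluding graphs to obtain constants $d=d(H)$ and $a=a(H)$, together with a tree-cut-style decomposition of $G$ into torsos, each of which contains at most $a$ vertices of degree at least $d$ and such that deleting those high-degree vertices leaves a graph excluding some fixed $H^\ast = H^\ast(H)$ as a weak immersion.

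Given such a decomposition, I would colour each torso as follows. Delete its at most $a$ high-degree vertices and apply \cref{NonRepWeakImm} to the remainder, obtaining a nonrepetitive colouring with $k_0 = k_0(H^\ast)$ colours. Extend this to the full torso by giving each deleted vertex a private colour, using $a$ additional colours. The extension is still nonrepetitive: any even-order repetitive path would be forced to place each privately coloured vertex in both halves, which no path of a simple graph can do. In particular, every torso is nonrepetitively coloured using at most $k_0 + a$ colours, a bound depending only on $H$.

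The final step is to combine the per-torso colourings into a global nonrepetitive colouring of $G$. I would take a Cartesian product of the torso colourings with a nonrepetitive colouring of the decomposition tree (following the scheme used for \cref{NonRepTreewidth}, adapted to tree-cut decompositions), so that any path crossing multiple torsos is also labelled by a sequence of ``tree-colours''. The bounded adhesion of the decomposition should then be enough to rule out repetitions on such crossing paths. I expect the main obstacle to lie precisely in this combining step: tree-cut decompositions have edge-adhesions rather than the vertex-adhesions of ordinary tree-decompositions, so a path in $G$ can exit and re-enter torsos in more intricate patterns, and verifying that the product colouring is nonrepetitive across all such patterns will require careful case analysis. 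This is where I expect most of the technical work to reside.
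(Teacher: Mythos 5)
Your high-level plan — decompose $G$ into torsos, treat the few high-degree hub vertices with private colours, colour the low-degree remainder, then combine across the decomposition tree — matches the paper's strategy. However, there is a genuine gap in your combining step, and it is self-inflicted: you are reaching for the wrong kind of decomposition.

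You invoke a ``tree-cut-style decomposition'' and correctly anticipate that combining torso colourings across a tree-cut decomposition is problematic, since tree-cut decompositions have edge adhesions rather than vertex adhesions and no analogue of \cref{NonRepTorso} is available for them. You leave this to ``careful case analysis'' and predict it will carry most of the technical weight. But the structure theorem the paper uses, \cref{StrongImmStructure} (Dvo\v{r}\'ak's theorem for excluded strong immersions), already produces an \emph{ordinary} tree decomposition whose torsos have $(k,k)$-bounded degree. Since the torsos contain cliques on the adhesion sets, $(k,k)$-bounded degree forces the vertex adhesion of that tree decomposition to be at most $k+1$, so \cref{NonRepTorso} applies verbatim and the combining step is immediate. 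Tree-cut decompositions do arise in this paper, but only in the \emph{weak}-immersion argument (via Wollan's theorem), where an explicit conversion to an ordinary tree decomposition is carried out in the proof of \cref{WeakImmStructureExplicit}; for the strong-immersion theorem no such conversion is needed.

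Two smaller points. First, the structural claim you posit — that deleting the hubs leaves a torso excluding some fixed $H^\ast$ as a weak immersion — is not what Dvo\v{r}\'ak's theorem says. What it gives is bounded degree after deleting the hubs; of course bounded degree implies excluding a large star as a weak immersion, but routing through \cref{NonRepWeakImm} is a needlessly indirect and quantitatively far worse way of colouring a bounded-degree graph than simply invoking \cref{NonRepDegree}, which is what the paper's \cref{NonRepBoundedDegree} does. Second, your per-torso colouring (private colours for the hubs plus a nonrepetitive colouring of the remainder) is exactly \cref{NonRepBoundedDegree}, and that part of your argument is correct. In short: replace the tree-cut decomposition with the ordinary tree decomposition of \cref{StrongImmStructure}, replace the detour through \cref{NonRepWeakImm} with \cref{NonRepDegree}, and the anticipated hard combining step evaporates — the proof becomes the one-line composition of \cref{NonRepBoundedDegree,NonRepTorso,StrongImmStructure}.
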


Note that planar graphs with $n$ vertices are nonrepetitively \Oh{\log n}-colourable \citep{DFJW13}, and the same is true for graphs excluding a fixed graph as a minor or topological minor \citep{DMW13}. It is unknown whether any of these classes have bounded nonrepetitive chromatic number. Our final result shows that excluding a special type of topological minor gives bounded nonrepetitive chromatic number. 

\begin{thm}
\label{NonRepTopo}
Let $H$ be a fixed planar graph that has a planar embedding with all the vertices of $H$ with degree at least 4 on a single face. Then there  exists a constant $k$, such that every graph $G$ that does not contain $H$ as a topological minor is nonrepetitively $k$-colourable.
\end{thm}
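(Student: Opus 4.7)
The strategy is to combine a structural decomposition theorem for graphs excluding a fixed topological minor with \cref{NonRepStrongImmersion}, which has already been established.

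The first step is to apply the Grohe--Marx structure theorem for graphs excluding $H$ as a topological minor. This yields a tree decomposition of $G$ of bounded adhesion such that every torso is either of bounded size, or is $k$-almost embeddable in a surface $\Sigma$ after removing a bounded number of apex vertices, with a bounded number of vortices of bounded width attached to certain faces; all parameters depend only on $H$.

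The second step is to argue that every torso of this decomposition excludes some fixed auxiliary graph $H^\ast$, depending only on $H$ and on the parameters of the structure theorem, as a \emph{strong immersion}. This is where the hypothesis on $H$ is used in an essential way. Since $H$ is planar with all its vertices of degree at least $4$ on a single face $F$, a subdivision of $H$ can be built inside any sufficiently rich torso by first routing the bounded number of high-degree branch vertices of $H$ onto the apex vertices (and vertices incident to a vortex) of the torso, and then routing the remaining low-degree part of $H$, which is essentially tree-like outside the face $F$, through the surface-embedded part. Consequently, a torso that contains $H^\ast$ as a strong immersion must already contain $H$ as a topological minor, contradicting the hypothesis on $G$.

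Having excluded a fixed graph as a strong immersion from every torso, we apply \cref{NonRepStrongImmersion} to each torso separately and obtain a nonrepetitive colouring with a bounded number of colours. To finish, we combine the per-torso colourings across the tree decomposition using the standard composition along the tree (in the spirit of the proof of \cref{NonRepTreewidth}): the colour of a vertex encodes its torso-colour together with a small amount of extra information derived from the recent bags along the tree, ensuring that any repetitively coloured path in $G$ can traverse only a bounded number of bags and hence must be detected either inside a single torso or across a short sequence of torsos.

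The technical heart of the argument, and the main obstacle, is the second step: translating the face condition on $H$ into the exclusion of a fixed graph as a strong immersion in every torso of the Grohe--Marx decomposition. Constructing a subdivision of $H$ from a large strong immersion inside an almost-embeddable torso requires a careful routing that places the few high-degree vertices of $H$ on the apex/vortex structure and the rest on the surface-embedded portion, and the geometric face condition on $H$ is precisely what makes such a routing possible.
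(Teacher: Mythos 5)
Your plan diverges substantially from the paper and contains a real gap. The paper's proof is a one-liner: it cites Dvořák's structure theorem \cref{TopoStructure}, which states directly that graphs excluding a special planar $H$ as a topological minor admit a tree decomposition of bounded adhesion whose torsos all have $(k,k)$-bounded degree. Combining this with \cref{NonRepBoundedDegree} (to colour each torso) and \cref{NonRepTorso} (to glue across the tree) immediately yields the theorem. The hard structural work is entirely outsourced to Dvořák's Theorem~3.

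Your proposal instead invokes the Grohe--Marx structure theorem for excluded topological minors and attempts to argue that each torso excludes some auxiliary graph $H^\ast$ as a strong immersion, after which you would apply \cref{NonRepStrongImmersion}. The problem is that your step~2 --- ``if a torso contains $H^\ast$ as a strong immersion then it contains $H$ as a topological minor'' --- is not proved; you explicitly call it the ``technical heart'' and the ``main obstacle,'' and only sketch a routing idea. This step \emph{is} the theorem: it is essentially Dvořák's specialised structure theorem (\cref{TopoStructure}) re-derived from scratch via Grohe--Marx plus a rerouting argument, and nothing in the proposal actually carries out that rerouting. Moreover, the Grohe--Marx decomposition is a poor starting point for this goal: its torsos are ``almost embeddable \emph{or} bounded degree,'' and almost-embeddable torsos include arbitrary planar graphs, for which it is an open problem whether nonrepetitive chromatic number is bounded (as the paper notes). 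A strong immersion of a fat-star-like $H^\ast$ in a planar torso does not obviously yield a \emph{subdivision} of $H$ --- strong immersions only give edge-disjoint paths, not the vertex-disjoint connectivity a topological minor requires --- so the contrapositive on which your argument hinges cannot be taken for granted. Until step~2 is filled in with a genuine proof, the argument is incomplete; and given that it would amount to reproving \cref{TopoStructure}, it is much more economical to cite Dvořák as the paper does.
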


Graphs with bounded treewidth exclude fixed walls as topological minors. Since walls are planar graphs with maximum degree 3, \cref{NonRepTopo} implies that graphs of bounded treewidth have bounded nonrepetitive chromatic number (as in \cref{NonRepTreewidth}). Similarly, for every graph $H$ with $t$ vertices, the `fat star' graph (which is the 1-subdivision of the $t$-leaf star with edge multiplicity $t$) contains $H$ as a strong immersion. Since fat stars embed in the plane with all vertices of degree at least 4 on a single face, \cref{NonRepTopo} implies that graphs excluding a fixed graph as a strong immersion have bounded nonrepetitive chromatic number (as in \cref{NonRepStrongImmersion}). In this sense, \cref{NonRepTopo} generalises all of \cref{NonRepDegree,NonRepTreewidth,NonRepWeakImm,NonRepStrongImmersion}. 

The results of this paper, in relation to the best known bounds on the nonrepetitive chromatic number, are summarised in \cref{Classes}.

\begin{figure}[!h]
\hspace*{-10mm}\includegraphics{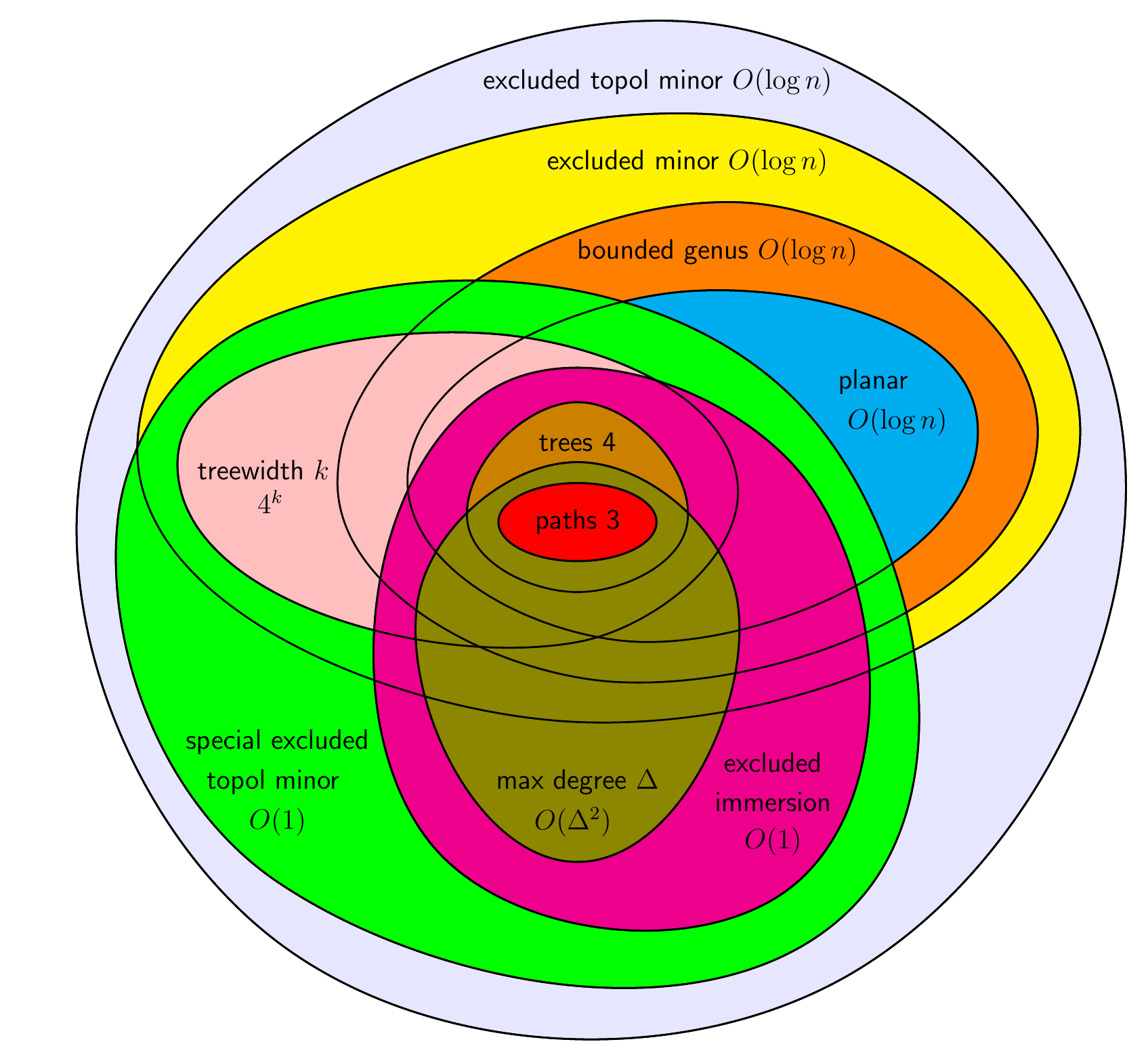}
\caption{\label{Classes} Upper bounds on the nonrepetitive chromatic number of various graph classes. `Special' refers to the condition in \cref{NonRepTopo}.}
\end{figure}

\section{Tree Decompositions}
\label{TreeDecompositions}

For a graph $G$ and tree $T$, a \emph{tree decomposition} or \emph{$T$-decomposition} of $G$ consists of a collection $(T_x\subseteq V(G):x\in V(T))$ of sets of vertices of $G$, called \emph{bags}, indexed by the nodes of $T$, such that for each vertex $v\in V(G)$ the set $\{x\in V(T):v\in T_x\}$ induces a connected subtree of $T$, and for each edge $vw$ of $G$ there is a node $x\in V(T)$ such that $v,w\in T_x$. The \emph{width} of a $T$-decomposition is the maximum, taken over the nodes $x\in V(T)$, of $|T_x|-1$. The \emph{treewidth} of a graph $G$ is the minimum width of a tree decomposition of $G$. The \emph{adhesion} of a tree decomposition $(T_x:x\in V(T))$ is $\max\{|T_x\cap T_y|:xy\in E(T)\}$. The \emph{torso} of each node $x\in V(T)$ is the graph obtained from $G[T_x]$ by adding a clique on $T_x\cap T_y$ for each edge $xy\in E(T)$ incident to $x$. \citet{DMW13} generalised  \cref{NonRepTreewidth} as follows:

\begin{lem}[\cite{DMW13}]
\label{NonRepTorso}
If a graph $G$ has a tree decomposition with adhesion $k$ such that every torso is nonrepetitively $c$-colourable, then $G$ is nonrepetitively $c\,4^k$-colourable.
\end{lem}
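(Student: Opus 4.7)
The plan is to build a nonrepetitive colouring of $G$ with at most $c \cdot 4^k$ colours as a product $(\alpha(v), \beta(v)) \in [c] \times \{1,2,3,4\}^k$: a primary colour $\alpha$ drawn from the given torso colourings, and a secondary colour $\beta$ that records enough of the adhesion structure to let any repetitive path in $G$ collapse to a repetitive walk in a single torso. Root $T$ at an arbitrary node $r$, and for each $v \in V(G)$ let $r_v$ denote the node of $X_v := \{x \in V(T) : v \in T_x\}$ closest to $r$. Fix a nonrepetitive $c$-colouring $\psi_x$ of each torso, and set $\alpha(v) := \psi_{r_v}(v)$.

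For the secondary colour, fix an arbitrary linear order on each adhesion set $A_x := T_x \cap T_{p(x)}$, giving each $u \in A_x$ an index $\mathrm{ind}_x(u) \in \{1, \dots, k\}$, and fix a nonrepetitive $4$-colouring $\tau$ of $T$ (available since trees are nonrepetitively $4$-colourable). Each coordinate $\beta_i(v)$, for $i \in \{1, \dots, k\}$, records the $\tau$-colour of a canonical node of $T$ witnessing the $i$-th position of $v$ in the adhesion sets containing or neighbouring $v$, with a default value otherwise. This gives $4^k$ possibilities for $\beta$ and $c \cdot 4^k$ colours in total.

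To establish nonrepetitivity, suppose $P = (v_1, \dots, v_{2t})$ is a path in $G$ repetitively coloured by $(\alpha, \beta)$. Since consecutive $X_{v_i}, X_{v_{i+1}}$ intersect, the union $\bigcup_i X_{v_i}$ is a subtree of $T$; let $x^*$ be its topmost node, so every $r_{v_i}$ is a descendant of $x^*$ in $T$. Call $v_i$ \emph{vertical} if $r_{v_i} = x^*$ (so $v_i \in T_{x^*}$) and \emph{bushy} otherwise. Connectivity of $P$ forces each maximal run of bushy vertices to lie in a subtree rooted at a single child $c$ of $x^*$ and to be bordered by vertices of the adhesion set $A_c$, which is a clique in the torso at $x^*$. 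Replacing each bushy vertex by a canonical adhesion-set vertex determined by $\beta$ yields a walk $W$ in the torso at $x^*$.

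The technical heart of the argument --- and the main obstacle --- is verifying that $W$ is repetitively coloured by $\psi_{x^*}$, contradicting nonrepetitivity of $\psi_{x^*}$ (after the standard reduction from repetitive walks to repetitive paths). For vertical--vertical matching pairs, $\alpha$-equality gives $\psi_{x^*}$-equality since $\psi_{r_v} = \psi_{x^*}$ for vertical $v$. For bushy--bushy matching pairs, $\beta$-equality must force the two vertices to project to the same adhesion-set vertex, and for mixed pairs the $\beta$-equality must be impossible; both are achieved by the design of $\beta$ via $\tau$ (whose values at differing heights in $T$ distinguish vertical from bushy) and the adhesion-set indexing. Making this bookkeeping run with only $4^k$ secondary colours is the delicate combinatorial step.
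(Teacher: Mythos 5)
The paper does not prove \cref{NonRepTorso} itself --- it is cited from \cite{DMW13} --- so the comparison is against the proof in that reference, which proceeds via \emph{shadow-complete layerings}: one colours by layer index using a nonrepetitive $4$-colouring of the integer line, and uses the clique structure of adhesion sets to shortcut ``dips'' into deeper layers, reducing a repetitive path to a repetitive walk in a single layer. Your sketch is a structurally different plan, and unfortunately it is not yet a proof.

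The central gap is that the secondary colouring $\beta$ is never actually defined. The phrase ``records the $\tau$-colour of a canonical node of $T$ witnessing the $i$-th position of $v$ in the adhesion sets containing or neighbouring $v$, with a default value otherwise'' leaves several things unresolved: a vertex $v$ typically lies in \emph{many} adhesion sets $A_x$ (one for each non-topmost node $x \in X_v$), and its index $\mathrm{ind}_x(v)$ can differ from one adhesion set to the next, so it is unclear which node $x$ is the ``canonical'' witness; and the ``default value'' has to live inside $\{1,2,3,4\}$ to stay within $4^k$ colours, so it cannot be reliably distinguished from a genuine $\tau$-value. Without a concrete $\beta$, the claims you then need --- that $\beta$-equality forces a bushy--bushy pair to project to the same adhesion vertex, and forbids mixed vertical--bushy pairs --- cannot be checked, and you yourself flag this as ``the technical heart of the argument,'' ``the main obstacle,'' and ``the delicate combinatorial step.'' A proof cannot simply point at the part where the work would happen.

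There is also a conceptual concern about whether this plan can be repaired. The distinction ``vertical versus bushy'' depends on $x^*$, which is determined by the path $P$ and is not intrinsic to the vertex $v$; so whatever $\beta$ records must work uniformly over all possible apex nodes $x^*$. The Kündgen--Pelsmajer/DMW13 device handles this cleanly because the layer index of $v$ (the $T$-depth of $r_v$) is intrinsic, consecutive vertices of a path change layer index by at most one, and a nonrepetitive colouring of the lazy walk on $\mathbb{Z}$ is exactly the right tool: it forces matching positions in the two halves to return to a common minimum layer, at which point the adhesion-clique structure lets you contract the excursions. Replacing the $\mathbb{Z}$-colouring by a nonrepetitive $4$-colouring $\tau$ of the tree $T$ itself is a genuinely different idea, and it is not clear that a repetitive path in $G$ induces anything repetitive in $T$ that $\tau$ could detect (the sequence $r_{v_1},\dots,r_{v_{2t}}$ is not a walk in $T$, and the ``topmost node'' is not determined coordinate-wise). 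I would recommend switching to the layering framework, where the factor of $4$ per unit of adhesion arises by induction, rather than trying to pack $k$ separate copies of a tree-colouring into $\beta$.
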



For integers $c,d\geq 0$ a graph $G$ has \emph{$(c,d)$-bounded degree} if $G$ contains at most $c$ vertices with degree greater than $d$. 

\begin{lem}
\label{NonRepBoundedDegree}
Every graph with $(c,d)$-bounded degree is nonrepetitively $c+(1+o(1))d^2$-colourable.
\end{lem}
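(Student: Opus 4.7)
The plan is straightforward: separate out the high-degree vertices, colour them with private colours, and apply \cref{NonRepDegree} to what remains.

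First I would let $S$ be the set of vertices of $G$ with degree greater than $d$, so $|S|\leq c$. The graph $G-S$ then has maximum degree at most $d$, so by \cref{NonRepDegree} it admits a nonrepetitive colouring $\psi'$ using a palette of size $(1+o(1))d^2$. Take a disjoint palette of $c$ fresh colours and assign each vertex of $S$ its own unique colour; combine this with $\psi'$ on $V(G)\setminus S$ to obtain a colouring $\psi$ of $G$ using $c+(1+o(1))d^2$ colours.

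To verify nonrepetitiveness, I would argue by contradiction. Suppose $(v_1,\dots,v_{2t})$ is a path of $G$ that is repetitively coloured by $\psi$. If some $v_i$ lies in $S$, then $\psi(v_i)$ is a colour used only on $v_i$; the equality $\psi(v_{t+i})=\psi(v_i)$ would force $v_{t+i}=v_i$, which is impossible since the vertices of a path are distinct. Hence no vertex of the path lies in $S$, so the path lies entirely inside $G-S$ and is repetitively coloured by $\psi'$, contradicting the choice of $\psi'$.

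There is no real obstacle here: the argument is just the standard trick of reserving a private colour for each exceptional vertex, and all the work has already been done in \cref{NonRepDegree}. The total number of colours is $c+(1+o(1))d^2$, as claimed.
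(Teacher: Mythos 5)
Your proof is correct and is essentially the same argument as the paper's: reserve a private colour for each of the at most $c$ high-degree vertices, apply \cref{NonRepDegree} to the rest, and note that any repetitively coloured path would have to avoid the uniquely-coloured vertices and thus lie entirely in the bounded-degree subgraph. The only (cosmetic) difference is that you correctly take $S$ to be the vertices of degree greater than $d$, matching the definition of $(c,d)$-bounded degree, whereas the paper's wording says ``degree at least $d$''; this does not affect the bound.
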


\begin{proof}
Assign a distinct colour to each vertex of degree at least $d$, and colour the remaining graph by \cref{NonRepDegree}. For each vertex $v$ of degree at least $d$, no other vertex is assigned the same colour as $v$. Thus $v$ is in no repetitively coloured path. The result then follows from \cref{NonRepDegree}.
\end{proof}

\citet{Dvorak12} proved the following structure theorem for graphs excluding a strong immersion. 

\begin{thm}[\cite{Dvorak12}] 
\label{StrongImmStructure}
For every fixed graph $H$, there exists a constant $k$, such that every graph $G$ that does not contain $H$ as a strong immersion has  a tree decomposition such that each torso is $(k,k)$-bounded degree.
\end{thm}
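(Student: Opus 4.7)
The plan is to build the required tree decomposition by iteratively splitting bags along small vertex separators, using the absence of a strong $H$-immersion in $G$ as an obstruction to having too-connected bags. Fix a sufficiently large constant $k=k(H)$ in terms of $|V(H)|$ and $|E(H)|$, and start from the trivial decomposition with single bag $V(G)$. Call a bag \emph{bad} if its torso contains more than $k$ vertices of degree greater than $k$. Whenever a bad bag $T_x$ exists, locate a vertex set $S\subseteq T_x$ of size at most $k$ that separates the high-degree vertices of the torso into two nonempty sides, and refine the decomposition by splitting $T_x$ along $S$. The adhesion is never pushed above $k$, and the splitting process terminates (for instance, using the sum of squared bag sizes as a well-founded measure), leaving a tree decomposition of $G$ of adhesion $\leq k$ in which every torso has $(k,k)$-bounded degree, as required.

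The heart of the argument is thus the following structural lemma: there exists $k=k(H)$ such that any graph $G'$ containing a set $U$ of more than $k$ vertices, each of degree greater than $k$, and admitting no vertex cut of size at most $k$ that separates $U$ into two pieces of size greater than $k$, contains $H$ as a strong immersion. I would prove this by first selecting $|V(H)|$ branch vertices from $U$ that are pairwise linked by many internally disjoint paths---possible because $U$ is vertex-linked by Menger's theorem applied to the non-existence of a small cut. Then route the edges of $H$ as pairwise edge-disjoint paths between these branch vertices, using the high degree at each branch vertex to seed each outgoing path from a private unused neighbour. The strong-immersion condition, which forbids branch vertices from appearing as internal vertices of routing paths, must be enforced throughout.

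The main obstacle is precisely this strong-immersion upgrade: producing edge-disjoint paths between the chosen branch vertices is standard Menger theory, but forbidding branch vertices as internal vertices of those paths is delicate. The canonical technique is Mader's splitting-off theorem, applied around each chosen branch vertex $v$. Because $v$ has degree greater than $k$, one can repeatedly split off pairs of edges at $v$---replacing them with shortcut edges between the corresponding neighbours---while preserving the global edge-connectivity required by the routing. This leaves $v$ incident only with a small ``private'' set of neighbours used to seed the paths of $H$ through $v$, while all ``through-traffic'' has been rerouted around $v$. Performing splitting-off simultaneously at all branch vertices, the remaining graph retains enough edge-connectivity for Menger-type routing to yield edge-disjoint paths that, by construction, no longer enter any branch vertex internally. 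This produces the required strong immersion of $H$ and, via the contrapositive, justifies the refinement step in the main decomposition argument, so choosing $k$ large enough at the outset to absorb all the constants arising from the splitting-off and linkage steps completes the proof.
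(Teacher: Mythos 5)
The paper does not prove this theorem: it is imported as a cited black box from Dvo{\v{r}}{\'a}k's 2012 preprint, where (as the paper explains for the closely related topological-minor version) it is derived from Dvo{\v{r}}{\'a}k's Theorem~3, which in turn rests on the Grohe--Marx structure theorem for graphs excluding a topological minor. So there is no in-paper proof to compare against; the question is whether your from-scratch argument is sound, and it has a genuine gap.

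The gap is in the splitting-off step, which is exactly where the ``strong'' in strong immersion has to be earned. You propose to apply Mader splitting-off \emph{at each chosen branch vertex} $v$, keeping a few private seed edges and rerouting all other traffic at $v$ into shortcut edges. But a split-off edge $uw$ created at $v$ corresponds, back in the host graph $G$, to the path $u$--$v$--$w$. If the Menger-type routing in the split-off graph uses any such shortcut, lifting the routing to $G$ reintroduces $v$ as an \emph{internal} vertex of that routing path---which is precisely what the strong-immersion condition forbids. Splitting off preserves strong immersions only when performed at vertices that are \emph{not} branch vertices; performed at a branch vertex it downgrades a strong immersion to a weak one. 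So the key structural lemma is not established by the tools you invoke, and this is not a small fix: the reason the literature reaches for the Grohe--Marx machinery here is that no elementary Menger-plus-Mader argument is known.

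Even setting that aside, several surrounding steps are asserted rather than argued. The ``sum of squared bag sizes'' potential need not decrease when a bag of size $n$ is split into two bags each containing the separator $S$ (take $n=10$, $|S|=5$, pieces of sizes $9$ and $6$: $81+36>100$), so termination needs a different measure. The claim that adhesion ``is never pushed above $k$'' requires showing the chosen separator can be made compatible with the existing adhesion sets of $T_x$ when reattaching its tree-neighbours. And Mader splitting-off carries parity and non-cut-vertex side conditions that are not checked. None of these is fatal on its own, but the misuse of splitting-off at branch vertices is.
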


\cref{NonRepBoundedDegree,NonRepTorso,StrongImmStructure} imply \cref{NonRepStrongImmersion}.

\section{Weak Immersions}

The proof of \cref{NonRepStrongImmersion} gives no explicit bound on the constant $k$. In this section we prove an explicit bound on the nonrepetitive chromatic number of graphs excluding a weak immersion.  \cref{NonRepWeakImm} follows from \cref{NonRepTorso} and the following structure theorem of independent interest.

\begin{thm}
\label{WeakImmStructureExplicit}
For every graph $H$ with $t$ vertices, every graph that does not contain $H$ as a weak immersion has a tree decomposition with adhesion at most $t^2$ such that every torso has $(t,t^4+2t^2)$-bounded degree.
\end{thm}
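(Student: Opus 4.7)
The plan is to prove this by induction on $|V(G)|$. Let $G$ be a graph with no $H$-weak-immersion (abbreviated $H$-immersion henceforth), where $|V(H)|=t$, and set $X := \{v \in V(G) : \deg_G(v) > t^4 + 2t^2\}$. If $|X| \leq t$, the trivial tree decomposition consisting of a single bag $V(G)$ has adhesion zero, and its torso is $G$ itself, which is $(t, t^4+2t^2)$-bounded-degree by the definition of $X$. This handles the base case.

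For the inductive step assume $|X| \geq t+1$. The heart of the argument is the following combinatorial key claim, which I would prove first: if $Y \subseteq V(G)$ has $|Y|=t$ and every pair $u \ne v$ in $Y$ has edge-connectivity $\lambda_G(u,v) > t^2$ in $G$, then $G$ contains $K_t$ as a weak immersion rooted at $Y$. Since $|V(H)|=t$ implies $H \subseteq K_t$, this then yields an $H$-immersion in $G$. The idea is that pairwise $(t^2+1)$-edge-connectivity comfortably exceeds the total routing demand $\binom{t}{2}$, so one can extract $\binom{t}{2}$ pairwise edge-disjoint paths (one for each edge of the target $K_t$) via Mader's splitting-off theorem or an iterative Menger-flow argument. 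Applied contrapositively to any $t$-subset of $X$, the claim produces a non-adjacent pair $u, v \in X$ with $\lambda_G(u,v) \leq t^2$ (if all $X$-pairs were adjacent then $X$ itself would contain $K_t \supseteq H$ outright).

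By $\kappa_G(u,v) \leq \lambda_G(u,v) \leq t^2$, there is a vertex separator $S \subseteq V(G) \setminus \{u,v\}$ with $|S| \leq t^2$ such that $u$ and $v$ lie in distinct components of $G - S$. Let $A$ be $S$ together with the component of $G - S$ containing $u$, and $B = V(G) \setminus (A \setminus S)$. Then $A \cap B = S$, both $A \setminus S$ and $B \setminus S$ are nonempty, and both $G[A]$ and $G[B]$ are $H$-immersion-free as subgraphs of $G$. I would apply induction to $G[A]$ and $G[B]$ and glue the two resulting tree decompositions by joining a bag of each that contains $S$, producing a tree decomposition of $G$ with adhesion at most $t^2$.

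The main obstacle is the key combinatorial claim on edge-disjoint paths; although pairwise $(t^2+1)$-edge-connectivity is much stronger than single-pair Menger, producing $\binom{t}{2}$ simultaneously edge-disjoint paths requires a genuine multi-commodity argument exploiting the slack $t^2 - \binom{t}{2}$. A secondary technical difficulty is arranging that $S$ sits inside a single bag of each recursive decomposition, which is needed for the gluing step. The standard workaround is to strengthen the inductive hypothesis to accept a prescribed set of size at most $t^2$ that must be placed in some bag, and to add a clique on $S$ to the recursive graph. One must then verify that this clique addition does not create a new $H$-immersion (each virtual clique edge is re-routable through the other side using the original connectivity), and that the $2t^2$ slack in the threshold $t^4+2t^2$ absorbs the at most $2(t^2-1)$ extra edges a non-$X$ vertex can acquire in the torso from the at most two adhesion cliques meeting at its bag, keeping the number of torso-high vertices per bag bounded by $t$.
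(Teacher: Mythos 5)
Your proposal takes a genuinely different route from the paper. The paper starts from Wollan's structure theorem (\cref{WeakImmStructureWollan}), which already provides a $T$-partition with adhesion $\leq t^2$ and torsos of $(t,t^2)$-bounded degree, and the entire proof is a careful \emph{conversion} of that $T$-partition into a tree decomposition, followed by degree bookkeeping. You instead attempt to build the tree decomposition from scratch by a separator-based induction, which amounts to re-deriving Wollan's theorem. That is a much heavier lift, and several steps do not go through as written.

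First, your key claim (pairwise local edge-connectivity $> t^2$ among $t$ vertices forces a $K_t$ weak immersion rooted there) is probably true, but it is not a quick Menger application: the natural proof splits off all non-terminal vertices via Mader's theorem to get a multigraph on the terminals and then packs $\geq \binom{t}{2}$ edge-disjoint spanning trees via Nash-Williams--Tutte. Mader's splitting-off has hypotheses (degree $\neq 3$, not incident to a cut edge) that your sketch doesn't handle, and the whole argument is a lemma at the difficulty level of what is in \cite{Wollan15}, not a routine step. Second, and more seriously, the clique-sum step does not obviously preserve $H$-immersion-freeness. To apply induction to $G[A]$ with a clique on $S$ added, you must show that graph excludes $H$. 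Your justification --- ``each virtual clique edge is re-routable through the other side'' --- fails because an $H$-immersion in $G[A]$-plus-clique may use up to $\binom{t^2}{2}$ clique edges, and the corresponding re-routings through $B$ must be \emph{pairwise} edge-disjoint; nothing guarantees $B$ has that much connectivity among $S$-vertices. This is precisely why Wollan's theorem (and the paper) works with $T$-partitions and torsos formed by \emph{contraction} of the branches rather than clique addition: contraction torsos behave well under immersion in a way that clique-sum torsos do not. Third, the degree bookkeeping is too optimistic: a vertex that lands in a separator $S$ enters both recursive pieces and can be re-chosen as part of later separators many times, so the ``at most two adhesion cliques per bag'' bound, and hence the claim that the $2t^2$ slack suffices, does not follow.

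So, while the outer shape of the argument (a chunking into pieces of small adhesion with few high-degree vertices per torso) matches the paper's target, the proposal has substantive gaps at all three pressure points and is trying to prove far more than the paper needs to, since the paper offloads the hard structural work to \cite{Wollan15}.
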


The starting point for the proof of \cref{WeakImmStructureExplicit} is the following structure theorem of \citet{Wollan15}. For a tree $T$ and graph $G$, a \emph{$T$-partition} of $G$ is a partition $(T_x\subseteq V(G):x\in V(T))$ of $V(G)$ indexed by the nodes of $T$. Each set $T_x$ is called a \emph{bag}. Note that a bag may be empty. For each edge $xy$ of a tree $T$, let $T(xy)$ and $T(yx)$ be the components of $T-xy$ where $x$ is in $T(xy)$ and $y$ is in $T(yx)$. For each edge $xy\in E(T)$, let $G(T,xy):=\bigcup\{T_z: z \in V(T(xy))\}$ and  $G(T,yx):=\bigcup\{T_z: z \in V(T(yx))\}$. Let $E(T,xy)$ be the set of edges in $G$ between $G(T,xy)$ and $G(T,yx)$. The \emph{adhesion} of a $T$-partition $(T_x:x\in V(T))$ is the maximum, taken over all edges $xy$ of $T$, of $|E(T,xy)|$. For each node $x$ of $T$, the \emph{torso} of $x$ (with respect to a $T$-partition) is the graph obtained from $G$ by identifying $G(T,yx)$ into a single vertex for each edge $xy$ incident to $x$ (deleting resulting parallel edges and loops).

\begin{thm}[\cite{Wollan15}]
\label{WeakImmStructureWollan}
For every graph $H$ with $t$ vertices, for every graph $G$ that does not contain $H$ as a weak immersion, there is a $T$-partition of $G$ with adhesion at most $t^2$ such that each torso has $(t,t^2)$-bounded degree.
\end{thm}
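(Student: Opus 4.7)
The plan is to proceed by induction on $|E(G)|$, with a dichotomy between a ``small cut'' case and a ``highly edge-connected'' case. Fix $H$ on $t$ vertices, and suppose $G$ contains no weak immersion of $H$. The base case is when $|V(G)| \leq t$, where the trivial $T$-partition (single node, one bag) works.

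\emph{Case 1: there is a bipartition $V(G) = A \sqcup B$ with $|A|,|B| \geq 1$ and $|E(A,B)| \leq t^2$.} Form $G_A$ by contracting $B$ to a single dummy vertex $b^*$, and similarly $G_B$ with dummy $a^*$. Using standard tools from the theory of tree-cut decompositions, neither $G_A$ nor $G_B$ contains $H$ as a weak immersion: the non-trivial direction is the lifting, where an immersion in $G_A$ using $k$ edges at $b^*$ must be matched with $k/2$ edge-disjoint routings inside $B$ through pairs of cut edges, and this follows from choosing the cut $E(A,B)$ to be a minimum cut (or using the pairing structure from the theory of ``tangles'' for immersions). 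Apply induction to $G_A$ and $G_B$ to obtain $T$-partitions; glue them by identifying $b^*$ with its counterpart in $G_B$ and inserting a new tree edge between the bags containing the two dummies. Adhesion across the new tree edge is $|E(A,B)| \leq t^2$, and pre-existing tree edges retain their cut sizes. Torsos at unaffected nodes are unchanged, and the torso at each glue node is essentially the torso inherited from $G_A$ or $G_B$, so the $(t,t^2)$-bounded degree condition is preserved.

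\emph{Case 2: every bipartition has cut size at least $t^2 + 1$, so $G$ is $(t^2{+}1)$-edge-connected.} Take $T$ to be a single node with $T_x = V(G)$; the torso is $G$ itself. I must show $G$ has at most $t$ vertices of degree exceeding $t^2$. Suppose for contradiction that $v_1,\ldots,v_{t+1}$ each have degree greater than $t^2$. For any partition $(X,Y)$ of $\{v_1,\ldots,v_{t+1}\}$, any edge-cut in $G$ separating $X$ from $Y$ has size at least $t^2+1$, which exceeds $|X|\cdot|Y| \leq \lfloor (t+1)^2/4\rfloor$ (since $3t^2 - 2t + 3 > 0$). By a Mader/Thomassen-style multi-terminal edge-routing theorem, there exist $\binom{t+1}{2}$ pairwise edge-disjoint paths joining every pair of the $v_i$. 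This realises $K_{t+1}$, and hence $H$ (as a subgraph of $K_{t+1}$ on a $t$-subset of branch vertices), as a weak immersion in $G$, a contradiction.

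The principal technical obstacle is the multi-pair edge-disjoint routing in Case 2: Menger's theorem alone only gives edge-disjoint paths for a single pair, whereas I need \emph{simultaneously} edge-disjoint paths between many specified pairs. The required routing theorem follows from the edge-connectivity assumption via Mader-type arguments on edge-disjoint $A$-paths or iterative splitting-off, but the exact quantitative threshold ($t^2+1$ edge-connectivity suffices to route $\binom{t+1}{2}$ demand pairs) must be pinned down carefully. A secondary technical point is in Case 1, where the lifting argument must use a minimum cut (or an equivalent certificate of ``immersion-respecting'' cuts) so that immersions through $b^*$ actually lift to $G$; choosing $F$ to be a minimum cut of size $\leq t^2$ ensures any matching of paired cut-edges can be realised by edge-disjoint paths in $B$.
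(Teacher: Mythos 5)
This statement is attributed to Wollan's 2015 paper via the bracketed citation; the paper under review does not prove it, it only cites it. So there is no proof in this paper to compare your attempt against, and I am assessing your argument on its own terms.

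The decisive gap is in Case~1, in the claim that neither $G_A$ nor $G_B$ contains $H$ as a weak immersion. A weak immersion of $H$ in $G_A$ determines, at the contracted vertex $b^*$, a specific pairing of the cut edges $E(A,B)$: each path of the immersion that passes through $b^*$ consumes two such edges, and if $b^*$ is itself a branch vertex it additionally ties some cut edges to a branch point. To lift the immersion back to $G$, one must realise that exact pairing by pairwise edge-disjoint paths inside $B$ (and, if $b^*$ is a branch vertex, choose a vertex of $B$ that reaches all the paired cut-edge endpoints edge-disjointly). The graph $B$ need not admit the required pairing even when $E(A,B)$ is a minimum cut. A concrete obstruction: let $B$ be a $4$-cycle $b_1b_2b_3b_4$ with one cut edge incident to each $b_i$; the crossing pairing $\{(b_1,b_3),(b_2,b_4)\}$ cannot be realised by edge-disjoint paths inside $B$, since any two such paths must share a cycle edge. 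The minimality of the cut is a statement about $G$, not about the internal routing power of $B$; what you actually need is that $B$ routes \emph{every} pairing of its boundary edge-disjointly, and establishing that, or circumventing it, is precisely the hard content of the structure theorem. Wollan's argument does not hinge on the contracted torsos themselves being $H$-immersion-free; it works with an edge-cut analogue of tangles/well-linked sets and argues in $G$ directly, so that routing obligations never have to be discharged inside a single contracted side.

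There are secondary issues as well. In Case~2, the quantitative form of the weak-linkedness theorem you need (Huck's theorem, that $f(k)$-edge-connectivity implies weak $k$-linkedness for some linear $f$) must be nailed down: you have $\binom{t+1}{2}$ demands and only $(t^2+1)$-edge-connectivity, which squeaks by only for $t$ large enough under the better versions of the bound. Moreover, once such a routing theorem is granted, your Case~2 is vacuous rather than a degree-counting step: any $(t^2+1)$-edge-connected graph on at least $t+1$ vertices already immerses $K_{t+1}$, hence $H$, so the case contradicts $H$-immersion-freeness outright; the "at most $t$ high-degree vertices" conclusion follows trivially because the case never arises for $|V(G)| > t$. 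Finally, the recursion in Case~1 makes no progress when $|A|=1$ or $|B|=1$ (then one of $G_A$, $G_B$ equals $G$), so you must insist $|A|,|B|\geq 2$, which changes what "Case~2" means: it is no longer plain $(t^2+1)$-edge-connectivity but a weaker condition, and your Case~2 argument would need to be revisited under that weaker hypothesis.
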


\begin{proof}[Proof of \cref{WeakImmStructureExplicit}]
Let $G$ be a graph that does not contain $H$ as a weak immersion. Consider the $T$-partition  $(T_x:x\in V(T))$ of $G$ from \cref{WeakImmStructureWollan}. 

Let $T'$ be obtained from $T$ by orienting each edge towards some root vertex. We now  define a tree decomposition $(T^*_x:x\in V(T))$ of $G$. Initialise $T^*_x:=T_x$ for each node $x\in V(T)$. For each edge $vw$ of $G$, if $v\in T_x$ and $w\in T_y$ and $z$ is the least common ancestor of $x$ and $y$ in $T'$, then add $v$ to $T^*_\alpha$ for each node $\alpha$ on the $\overrightarrow{xz}$ path in $T'$, and add $w$ to $T^*_\alpha$ for each node $\alpha$ on the $\overrightarrow{yz}$ path in $T'$. Thus each vertex $v\in T_x$ is in a sequence of bags that correspond to a directed path from $x$ to some ancestor of $x$ in $T'$. By construction, the endpoints of each edge are in a common bag. Thus $(T^*_x:x\in V(T))$ is a tree decomposition of $G$. 

Consider a vertex $v \in T^*_x \cap T^*_y$ for some edge $\overrightarrow{xy}$ of $T'$. Then $v$ has a neighbour $w$ in $G(T,yx)$, and $vw\in E(T,xy)$. Thus $|T^*_x \cap T^*_y| \leq  |E(T,xy)| \leq t^2$. That is, the tree decomposition $(T^*_x:x\in V(T))$ has adhesion at most $t^2$. 

Let $G^+_x$ be the torso of each node $x\in V(T)$ with respect to the tree decomposition $(T^*_x:x\in V(T))$. That is, $G^+$ is obtained from $G[T^*_x]$ by adding a clique on $T^*_x\cap T^*_y$ for each edge $xy$ of $T$. Our goal is to prove that $G^+_x$ has $(t,t^4+2t^2)$-bounded degree. 

Consider a vertex $v$ of $G^+_x$. Then $v$ is in at most one child bag $y$ of $x$, as otherwise $v$ would belong to a set of bags that do not correspond to a directed path in $T'$. Since $(T^*_x:x\in V(T))$ has adhesion at most $t^2$,  $v$ has at most $t^2$ neighbours in $T^*_x\cap T^*_p$, where $p$ is the parent of $x$ and $v$ has at most $t^2$ neighbours in $T^*_x\cap T^*_y$. Thus the degree of $v$ in $G^+_x$ is at most the degree of $v$ in $G[T^*_x]$ plus $2t^2$. Call this property ($\star$). 

First consider the case that $v\not\in T_x$. Let $z$ be the node of $T$ for which $v\in T_z$. Since $v\in T^*_x$, by construction, $x$ is an ancestor of $z$. Let $y$ be the node immediately before $x$ on the $\overrightarrow{zx}$ path in $T'$. We now bound the number of neighbours of $v$ in $T^*_x$. Say $w\in N_G(v) \cap T^*_x$. If $w$ is in $G(T,xy)$ then let $e_w$ be the edge $vw$. Otherwise, $w$ is in $G(T,yx)$ and thus $w$ has a neighbour $u$ in $G(T,xy)$ since $w\in T^*_x$; let $e_w$ be the edge $wu$. Observe that $\{e_w:w\in N_G(v)\cap T^*_x\}\subseteq E(T,xy)$, and thus $|\{e_w:w\in N_G(v)\cap T^*_x\}|\leq t^2$. Since $e_u\neq e_w$ for distinct $u,w\in N_G(v)\cap T^*_x$, we have $|N_G(v)\cap T^*_x|\leq t^2$. By ($\star$), the degree of $v$ in $G^+_x$ is at most $3t^2$. 

Now consider the case that $v\in T_x$. Suppose further that $v$ is not one of the at most $t$ vertices of degree greater than $t^2$ in the torso $Q$ of $x$ with respect to the given $T$-partition. Suppose that in $Q$, $v$ has $d_1$ neighbours in $T_x$ and $d_2$ neighbours not in $T_x$ (the identified vertices). So $d_1+d_2\leq t^2$. Consider a neighbour $w$ of $v$ in $G[T^*_x]$ with $w\not\in T_x$. Then $w\in G(T,yx)$ for some child $y$ of $x$. For at most $d_2$ children $y$ of $x$, there is a neighbour of $v$ in $G(T,yx)$. Furthermore, for each child $y$ of $x$,  $v$ has at most $t^2$ neighbours in $G(T,yx)$ since the $T$-partition has adhesion at most $t^2$. Thus $v$ has degree at most $d_1+d_2t^2\leq t^4$ in $G[T^*_x]$. By ($\star$), $v$ has degree at most $2t^2+t^4$ in $G^+_x$. 

Since $3t^2\leq t^4+2t^2$, the torso $G^+_x$ has $(t,t^4+2t^2)$-bounded degree. 
\end{proof}

\section{Excluding a Topological Minor}

\cref{NonRepTopo} is an immediate corollary of \cref{NonRepTorso} and the following structure theorem 
of \citet{Dvorak12} that extends  \cref{StrongImmStructure}.

\begin{thm}[\cite{Dvorak12}] 
\label{TopoStructure}
Let $H$ be a  fixed planar graph that has a planar embedding with all the vertices of $H$ with degree at least 4 on a single face. Then there  exists a constant $k$, such that every graph $G$ that does not contain $H$ as a topological minor has  a tree decomposition such that each torso has $(k,k)$-bounded degree.
\end{thm}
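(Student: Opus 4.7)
The plan is to invoke the general structure theorem for graphs excluding a topological minor, in the strong form due to Grohe and Marx that refines Robertson--Seymour. This gives any $H$-topological-minor-free graph a tree decomposition in which every torso is one of three types: (i) of bounded size, (ii) of $(k,k)$-bounded degree, or (iii) ``almost-embeddable'' in a surface of bounded genus, meaning that after the deletion of a bounded number of apex vertices and a bounded number of vortices of bounded width, the remainder embeds in a surface of bounded genus. Types (i) and (ii) are already of the required form, so the work is to show that the special hypothesis on $H$ forces type (iii) torsos also to have $(k,k)$-bounded degree. This mirrors the strategy used in \cref{StrongImmStructure}, but needs the planarity of $H$ to rule out high-genus obstructions.

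For a torso $T$ of type (iii), I would argue by contradiction: suppose $T$ contains more than $k$ vertices of degree greater than $k$, and then construct $H$ as a topological minor in $T$. After deleting the bounded number of apices and vortices, the remainder $T'$ embeds in a fixed bounded-genus surface and still carries, for $k$ large, arbitrarily many high-degree vertices. Using the fact that a bounded-genus graph of sufficiently large treewidth contains a large \emph{flat} planar wall, I would locate a planar disc region $\Pi$ that meets neither the apices nor the vortices, hosts a dense linked planar subgraph in its interior, and has many high-degree vertices of $T'$ near its boundary.

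The hypothesis on $H$ now does its work. Fix a planar embedding of $H$ in which all vertices of degree at least $4$ lie on a single face $F$; equivalently, $H$ is the union of a planar graph $H_0$ drawn in a closed disc with its degree-$\geq 4$ vertices on the boundary, together with paths of degree-$\leq 3$ subdivision vertices in the interior. I would assign the branch vertices corresponding to the degree-$\geq 4$ vertices of $H$ to high-degree vertices of $T'$ lying on $\partial \Pi$ in the cyclic order dictated by $F$, and then route the remaining edges and interior subdivision paths through $\Pi$ by a cyclic planar linkage argument based on Menger's theorem and a sufficiently wide flat wall in $\Pi$. High degree at each branch vertex is what supplies the edge-emanating ``fans'' needed to feed $H$'s incident paths into the disc.

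The main obstacle will be executing this routing step cleanly: the routed paths must be internally disjoint, must avoid the other branch vertices, and must stay inside the chosen region. Avoiding the bounded total number of apices and vortices is easy by choosing $\Pi$ to miss them, but aligning the cyclic order of $H$'s boundary vertices on $F$ with the order of the selected high-degree vertices of $T'$ on $\partial \Pi$, and then realising all required subdivisions simultaneously, is the technical heart of the argument and is substantially harder than the reduction to the Grohe--Marx theorem. I would expect this step to be where the single-face condition on $H$ is genuinely used, since dropping it would force linking branch vertices through the \emph{interior} of $\Pi$ as well, and no planar linkage could be guaranteed.
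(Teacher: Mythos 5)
Your proposal takes a fundamentally different route from the paper, and it is a route with real gaps rather than merely unexecuted details. The paper does almost nothing here: \cref{TopoStructure} is read off directly from Theorem~3 of Dvo\v{r}\'ak~\cite{Dvorak12}, which already refines the Grohe--Marx structure theorem precisely so as to control the high-degree vertices inside almost-embeddable torsos. The only verification required is that the single-face hypothesis on $H$ gives $\mf(H,\Sigma)=1$ for every surface $\Sigma$ (in the notation of \cite{Dvorak12}), after which Dvo\v{r}\'ak's Theorem~3 delivers a clique-sum decomposition into $(k,k)$-bounded-degree graphs. You instead start from Grohe--Marx and try to show directly that an almost-embeddable torso with many high-degree vertices contains $H$ topologically; that is in effect re-proving Dvo\v{r}\'ak's theorem, which is the substance of a long paper.

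Two specific steps in your sketch do not go through as stated. First, after deleting the apices and vortices you assert that one can ``locate a planar disc region $\Pi$ \ldots\ [with] many high-degree vertices of $T'$ near its boundary''. There is no reason the high-degree vertices of the surface-embedded part should concentrate near any single disc, or even near a flat wall: they may be spread over the surface, and a bounded-genus graph can have arbitrarily many high-degree vertices while having small treewidth, in which case no large wall exists at all. Gathering many high-degree vertices onto the boundary of a common disc via edge-disjoint or vertex-disjoint linkages is itself one of the core difficulties that Dvo\v{r}\'ak's proof resolves; it cannot be assumed. Second, the routing of $H$ through $\Pi$ --- realising the required subdivision with internally disjoint paths while aligning the cyclic order of the degree-$\geq 4$ vertices of $H$ on $F$ with the cyclic order of the chosen high-degree vertices on $\partial\Pi$ --- is, as you say yourself, the technical heart, and your sketch gives no mechanism for it. A correct execution would essentially reproduce \cite{Dvorak12}; in the paper this is rightly cited rather than re-proved.
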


While \cref{TopoStructure} is not explcitly stated in \citep{Dvorak12}, we now explain that it is in fact a special case of Theorem~3 in \cite{Dvorak12}. This result provides a structural description of graphs excluding a given topological minor in terms of the following definition. For a graph $H$ and surface $\Sigma$, let $\mf(H, \Sigma)$ be the minimum, over all possible embeddings of $H$ in $\Sigma$, of the  minimum number of  faces such that every vertex of degree at least 4  is incident with one of these faces. By assumption, for our graph $H$ and for every surface $\Sigma$, we have $\mf(H,\Sigma)=1$. In this case, Theorem~3 of \citet{Dvorak12} says that for some integer $k=k(H)$, every graph $G$  that does not contain $H$ as a topological minor  is a clique sum of $(k,k)$-bounded degree graphs. It immediately follows that $G$ has the desired tree decomposition. See Corollary 1.4 in \cite{LT14} for a closely related structure theorem. 

The following natural open problem arises from this research: Do graphs excluding a fixed planar graph as a topological minor have bounded nonrepetitive chromatic number? And what is the structure of such graphs?

\subsection*{Acknowledgement} This research was initiated at the \emph{Workshop on New Trends in Graph Coloring} held at the Banff International Research Station in October 2016. Thanks to the organisers. 
And thanks to Chun-Hung Liu and Zden{\v{e}}k Dvo{\v{r}}{\'a}k for stimulating conversations. 


\def\soft#1{\leavevmode\setbox0=\hbox{h}\dimen7=\ht0\advance \dimen7
  by-1ex\relax\if t#1\relax\rlap{\raise.6\dimen7
  \hbox{\kern.3ex\char'47}}#1\relax\else\if T#1\relax
  \rlap{\raise.5\dimen7\hbox{\kern1.3ex\char'47}}#1\relax \else\if
  d#1\relax\rlap{\raise.5\dimen7\hbox{\kern.9ex \char'47}}#1\relax\else\if
  D#1\relax\rlap{\raise.5\dimen7 \hbox{\kern1.4ex\char'47}}#1\relax\else\if
  l#1\relax \rlap{\raise.5\dimen7\hbox{\kern.4ex\char'47}}#1\relax \else\if
  L#1\relax\rlap{\raise.5\dimen7\hbox{\kern.7ex
  \char'47}}#1\relax\else\message{accent \string\soft \space #1 not
  defined!}#1\relax\fi\fi\fi\fi\fi\fi}

\end{document}